\documentclass{amsart}
\usepackage{amssymb}
\usepackage{amsfonts}

\setcounter{MaxMatrixCols}{10}

\newtheorem{theorem}{Theorem}
\theoremstyle{plain}

\newtheorem{definition}{Definition}

\newtheorem{lemma}{Lemma}

\numberwithin{equation}{section}
\input{tcilatex}

\begin{document}
\title[On Co-ordinated Quasi-Convex Functions]{On Co-ordinated Quasi-Convex
Functions}
\author{$^{\blacktriangledown }$M. Emin \"{O}ZDEM\.{I}R}
\address{$^{\blacktriangledown }$Atat\"{u}rk University, K.K. Education
Faculty, Department of Mathematics, 25240, Kampus, Erzurum, Turkey}
\email{emos@atauni.edu.tr}
\author{$^{\spadesuit ,\clubsuit }$Ahmet Ocak AKDEM\.{I}R}
\curraddr{$^{\spadesuit }$A\u{g}r\i\ \.{I}brahim \c{C}e\c{c}en University,
Faculty of Science and Arts, Department of Mathematics, 04100, A\u{g}r\i ,
Turkey}
\email{ahmetakdemir@agri.edu.tr}
\author{$^{\bigstar }$\c{C}etin YILDIZ}
\address{$^{\bigstar }$Atat\"{u}rk University, K.K. Education Faculty,
Department of Mathematics, 25240, Kampus, Erzurum, Turkey}
\email{yildizcetiin@yahoo.com}
\date{December 10, 2010}
\subjclass[2000]{Primary 26A51, 26D15}
\keywords{co-ordinates, quasi-convex, Wright-quasi-convex,
Jensen-quasi-convex}

\begin{abstract}
In this paper, we give some definitions on quasi-convex functions and we
prove inequalities contain J-quasi-convex and W-quasi-convex functions. We
give also some inclusions.
\end{abstract}

\maketitle

\section{INTRODUCTION}

Let $f:I\subset 
\mathbb{R}
\rightarrow 
\mathbb{R}
$ be a convex function on the interval of $I$ of real numbers and $a,b\in I$
with $a<b.$ The following double inequality%
\begin{equation}
f\left( \frac{a+b}{2}\right) \leq \frac{1}{b-a}\dint\nolimits_{a}^{b}f(x)dx%
\leq \frac{f(a)+f(b)}{2}  \label{1.1}
\end{equation}%
is well-known in the literature as Hadamard's inequality. We recall some
definitions;

In \cite{PEC}, Pecaric et al. defined quasi-convex functions as following

\begin{definition}
A function $f:\left[ a,b\right] \rightarrow 
\mathbb{R}
$ is said quasi-convex on $\left[ a,b\right] $ if%
\begin{equation*}
f\left( \lambda x+(1-\lambda )y\right) \leq \max \left\{ f(x),f(y)\right\} ,%
\text{ \ \ \ \ }\left( QC\right)
\end{equation*}%
holds for all $x,y\in \left[ a,b\right] $ and $\lambda \in \lbrack 0,1].$
\end{definition}

Clearly, any convex function is quasi-convex function. Furthermore, there
exist quasi-convex functions which are not convex.

\begin{definition}
(See \cite{SS1}, \cite{WR}) We say that $f:I\rightarrow 
\mathbb{R}
$ is a Wright-convex function or that $f$ belongs to the class $W(I),$ if
for all $x,$ $y+\delta \in I$ with $x<y$ and $\delta >0,$ we have%
\begin{equation*}
f(x+\delta )+f(y)\leq f(y+\delta )+f(x)
\end{equation*}
\end{definition}

\begin{definition}
(See \cite{SS1}) For $I\subseteq 
\mathbb{R}
,$ the mapping $f:I\rightarrow 
\mathbb{R}
$ is wright-quasi-convex function if, for all $x,y\in I$ and $t\in \left[ 0,1%
\right] ,$ one has the inequality%
\begin{equation*}
\frac{1}{2}\left[ f\left( tx+\left( 1-t\right) y\right) +f\left( \left(
1-t\right) x+ty\right) \right] \leq \max \left\{ f\left( x\right) ,f\left(
y\right) \right\} ,\text{ \ \ \ \ }\left( WQC\right)
\end{equation*}%
or equivalently%
\begin{equation*}
\frac{1}{2}\left[ f\left( y\right) +f\left( x+\delta \right) \right] \leq
\max \left\{ f\left( x\right) ,f\left( y+\delta \right) \right\}
\end{equation*}%
for every $x,$ $y+\delta \in I,$ $x<y$ and $\delta >0.$
\end{definition}

\begin{definition}
(See \cite{SS1}) The mapping $f:I\rightarrow 
\mathbb{R}
$ is Jensen- or J-quasi-convex if 
\begin{equation*}
f\left( \frac{x+y}{2}\right) \leq \max \left\{ f(x),f(y)\right\} ,\text{ \ \
\ \ }\left( JQC\right)
\end{equation*}%
for all $x,y\in I.$
\end{definition}

Note that the class $JQC(I)$ of J-quasi-convex functions on $I$ contains the
class $J(I)$ of J-convex functions on $I,$ that is, functions satisfying the
condition%
\begin{equation*}
f\left( \frac{x+y}{2}\right) \leq \frac{f(x)+f(y)}{2},\text{ \ \ \ }\left(
J\right)
\end{equation*}%
for all $x,y\in I.$

In \cite{SS1}, Dragomir and Pearce proved following theorems containing
J-quasi-convex and Wright-quasi-convex functions.

\begin{theorem}
Suppose $a,b\in I\subseteq 
\mathbb{R}
$ and $a<b.$ If $f\in JQC\left( I\right) \cap L_{1}\left[ a,b\right] ,$ then 
\begin{equation}
f\left( \frac{a+b}{2}\right) \leq \frac{1}{b-a}\dint%
\nolimits_{a}^{b}f(x)dx+I\left( a,b\right)  \label{1.2}
\end{equation}%
where%
\begin{equation*}
I\left( a,b\right) =\frac{1}{2}\int_{0}^{1}\left\vert f\left( ta+\left(
1-t\right) b\right) -f\left( \left( 1-t\right) a+tb\right) \right\vert dt.
\end{equation*}
\end{theorem}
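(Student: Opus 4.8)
The plan is to exploit the defining inequality of $JQC(I)$ at the single, cleverly chosen pair of points whose midpoint is always $\frac{a+b}{2}$, and then convert the resulting maximum into an average plus a deviation term. Concretely, for $t\in[0,1]$ set $x=ta+(1-t)b$ and $y=(1-t)a+tb$; both lie in $I$ since $I$ is an interval containing $a$ and $b$, and $\frac{x+y}{2}=\frac{a+b}{2}$. The hypothesis $f\in JQC(I)$ then gives, for every $t\in[0,1]$,
\begin{equation*}
f\left(\frac{a+b}{2}\right)\le \max\left\{f\left(ta+(1-t)b\right),\,f\left((1-t)a+tb\right)\right\}.
\end{equation*}

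Next I would use the elementary identity $\max\{u,v\}=\dfrac{u+v}{2}+\dfrac{|u-v|}{2}$, valid for all real $u,v$, to rewrite the right-hand side as
\begin{equation*}
\frac{f\left(ta+(1-t)b\right)+f\left((1-t)a+tb\right)}{2}+\frac{\left|f\left(ta+(1-t)b\right)-f\left((1-t)a+tb\right)\right|}{2}.
\end{equation*}
I would then integrate both sides over $t\in[0,1]$; the left side is constant, so it integrates to $f\!\left(\frac{a+b}{2}\right)$, while the integrals on the right exist because $f\in L_1[a,b]$ (so $t\mapsto f(ta+(1-t)b)$ and $t\mapsto f((1-t)a+tb)$ are integrable on $[0,1]$ after the affine substitution, and hence so is the absolute value of their difference).

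Finally I would evaluate the two pieces. With the substitution $x=ta+(1-t)b$ one gets $\int_0^1 f\left(ta+(1-t)b\right)dt=\frac{1}{b-a}\int_a^b f(x)\,dx$, and with $x=(1-t)a+tb$ one gets the same value $\frac{1}{b-a}\int_a^b f(x)\,dx$; hence the first term after dividing by $2$ collapses to exactly $\frac{1}{b-a}\int_a^b f(x)\,dx$. The second term is, by definition, $I(a,b)$. Assembling these yields inequality \eqref{1.2}.

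There is no serious obstacle here: the only point requiring a word of justification is the integrability of the absolute-difference integrand $I(a,b)$, which follows from $f\in L_1[a,b]$ together with the affine changes of variable and the triangle inequality; everything else is the identity for $\max$ and routine substitution. If one wanted to be careful, one could also remark that $f\!\left(\frac{a+b}{2}\right)$ is finite as a consequence of the $JQC$ inequality bounding it by $\max\{f(a),f(b)\}$ (taking $t=0$).
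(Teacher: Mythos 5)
Your proof is correct, and the approach is the standard one: the paper itself does not prove this theorem but quotes it from Dragomir and Pearce \cite{SS1}, and your argument --- apply the $JQC$ inequality to the symmetric pair $ta+(1-t)b$, $(1-t)a+tb$ whose midpoint is $\frac{a+b}{2}$, rewrite the maximum via $\max\{u,v\}=\frac{u+v}{2}+\frac{|u-v|}{2}$, and integrate over $t\in[0,1]$ using the affine substitutions --- is precisely the proof given in that reference. The only point worth stating explicitly is the one you already flag, namely that $f\in L_{1}[a,b]$ guarantees the integrability of $t\mapsto f(ta+(1-t)b)$ and of the absolute difference, so the term $I(a,b)$ is well defined.
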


\begin{theorem}
Let $f:I\rightarrow 
\mathbb{R}
$ be a Wright-quasi-convex map on $I$ and suppose $a,b\in I\subseteq 
\mathbb{R}
$ with $a<b$ and $f\in L_{1}\left[ a,b\right] ,$ one has the inequality%
\begin{equation}
\frac{1}{b-a}\dint\nolimits_{a}^{b}f(x)dx\leq \max \left\{ f(a),f(b)\right\}
.  \label{1.3}
\end{equation}
\end{theorem}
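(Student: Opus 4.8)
The plan is to reduce the integral over $[a,b]$ to an integral over $[0,1]$ via the affine change of variables $x=ta+(1-t)b$, exploit the built-in symmetry of the Wright-quasi-convex condition, and then integrate the defining inequality termwise.

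First I would write, using the substitution $x=ta+(1-t)b$ (so $dx=-(b-a)\,dt$ and the endpoints swap),
\begin{equation*}
\frac{1}{b-a}\int_{a}^{b}f(x)\,dx=\int_{0}^{1}f\left( ta+(1-t)b\right) dt.
\end{equation*}
Next, replacing $t$ by $1-t$ in this last integral (a measure-preserving change on $[0,1]$) gives the companion identity
\begin{equation*}
\frac{1}{b-a}\int_{a}^{b}f(x)\,dx=\int_{0}^{1}f\left( (1-t)a+tb\right) dt,
\end{equation*}
and averaging the two representations yields
\begin{equation*}
\frac{1}{b-a}\int_{a}^{b}f(x)\,dx=\frac{1}{2}\int_{0}^{1}\left[ f\left(
ta+(1-t)b\right) +f\left( (1-t)a+tb\right) \right] dt.
\end{equation*}

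Then I would apply the $(WQC)$ inequality with $x=a$, $y=b$ and parameter $t\in[0,1]$: for each such $t$,
\begin{equation*}
\frac{1}{2}\left[ f\left( ta+(1-t)b\right) +f\left( (1-t)a+tb\right) \right]
\leq \max \left\{ f(a),f(b)\right\} .
\end{equation*}
Integrating this pointwise bound over $t\in[0,1]$ (legitimate since $f\in L_{1}[a,b]$ makes the left side integrable, and the right side is a constant) gives exactly $\frac{1}{b-a}\int_{a}^{b}f(x)\,dx\leq \max\{f(a),f(b)\}$, which is \eqref{1.3}.

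I do not anticipate a serious obstacle here; the only point needing a word of care is measurability/integrability of the two one-variable functions $t\mapsto f(ta+(1-t)b)$ and $t\mapsto f((1-t)a+tb)$, which follows from $f\in L_{1}[a,b]$ together with the affine change of variable, so that the termwise integration of the $(WQC)$ inequality is justified. Everything else is a direct symmetrization argument.
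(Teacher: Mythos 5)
Your argument is correct: the affine substitution, the $t\mapsto 1-t$ symmetrization, and termwise integration of the $(WQC)$ inequality with $x=a$, $y=b$ give exactly \eqref{1.3}. Note that the paper itself offers no proof of this statement (it is quoted from Dragomir and Pearce \cite{SS1}), but your proof is the standard one from that source, so there is nothing to add.
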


In \cite{SS1}, Dragomir and Pearce also gave the following theorems
involving some inclusions.

\begin{theorem}
Let $WQC\left( I\right) $ denote the class of Wright-quasi-convex functions
on $I\subseteq 
\mathbb{R}
,$ then 
\begin{equation}
QC\left( I\right) \subset WQC\left( I\right) \subset JQC\left( I\right) .
\label{1.4}
\end{equation}%
Both inclusions are proper.
\end{theorem}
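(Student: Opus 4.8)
The plan is to derive the two chain inclusions straight from the definitions and then to produce explicit functions witnessing that each inclusion is strict. For $QC(I)\subseteq WQC(I)$, fix $f\in QC(I)$, $x,y\in I$ and $t\in[0,1]$; since $tx+(1-t)y$ and $(1-t)x+ty$ are both convex combinations of $x$ and $y$, applying $(QC)$ twice gives $f(tx+(1-t)y)\le\max\{f(x),f(y)\}$ and $f((1-t)x+ty)\le\max\{f(x),f(y)\}$, and averaging these is precisely $(WQC)$. For $WQC(I)\subseteq JQC(I)$, fix $f\in WQC(I)$ and $x,y\in I$ and put $t=\tfrac12$ in $(WQC)$: then $tx+(1-t)y=(1-t)x+ty=\tfrac{x+y}{2}$, the left-hand side collapses to $f\!\left(\tfrac{x+y}{2}\right)$, and $(JQC)$ follows. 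Both steps are one-line verifications once the definitions are unwound.

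For strictness of $WQC(I)\subsetneq JQC(I)$ I would take $f:\mathbb{R}\to\mathbb{R}$ with $f\equiv 0$ on $\mathbb{Q}$ and $f\equiv 1$ on $\mathbb{R}\setminus\mathbb{Q}$ (restricted to any interval $I$ if a bounded domain is wanted). It lies in $JQC(I)$: when $\max\{f(x),f(y)\}=1$ the inequality $(JQC)$ is trivial, and when $\max\{f(x),f(y)\}=0$ we have $x,y\in\mathbb{Q}$, so $\tfrac{x+y}{2}\in\mathbb{Q}$ and $f(\tfrac{x+y}{2})=0$. It is not in $WQC(I)$: choosing two rationals $x<y$ in $I$ and an irrational $t\in(0,1)$ makes both $tx+(1-t)y$ and $(1-t)x+ty$ irrational, so the left-hand side of $(WQC)$ equals $1>0=\max\{f(x),f(y)\}$.

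For strictness of $QC(I)\subsetneq WQC(I)$ I would use a non-linear additive function $g:\mathbb{R}\to\mathbb{R}$, i.e.\ a solution of $g(u+v)=g(u)+g(v)$ that is not of the form $g(u)=cu$ (one exists by a Hamel-basis argument). Additivity gives, for all $x,y$ and all $t$, $g(tx+(1-t)y)+g((1-t)x+ty)=\bigl[g(tx)+g((1-t)x)\bigr]+\bigl[g((1-t)y)+g(ty)\bigr]=g(x)+g(y)$, whence $\tfrac12\bigl[g(tx+(1-t)y)+g((1-t)x+ty)\bigr]=\tfrac{g(x)+g(y)}{2}\le\max\{g(x),g(y)\}$ and $g\in WQC(\mathbb{R})$ (likewise on any subinterval). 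But $g\notin QC(I)$, because $(QC)$ forces a quasi-convex function on $[a,b]$ to be bounded above there by $\max\{f(a),f(b)\}$, while a non-linear additive function is unbounded on every interval. This last point is where the real work sits: the two inclusions and the $\mathbb{R}\setminus\mathbb{Q}$-indicator are routine, whereas exhibiting an element of $WQC\setminus QC$ requires leaving the class of measurable functions and invoking the classical unboundedness of pathological additive functions, which I would either cite or justify quickly from the density of the graph of $g$ in $\mathbb{R}^{2}$.
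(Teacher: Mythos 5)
Your proposal is correct, but note that the paper does not actually prove this statement: it is quoted verbatim from Dragomir and Pearce \cite{SS1} as background. The closest argument the paper contains is the proof of its final theorem, which establishes the co-ordinated analogue $QC(\Delta)\subset WQC(\Delta)\subset JQC(\Delta)$ by exactly the device you use for the inclusions --- write the $(QC)$ inequality at $\lambda$ and at $1-\lambda$, average to get $(WQC)$, then specialise $\lambda=\tfrac12$ to get $(JQC)$ --- but that theorem never addresses properness, and neither does anything else in the paper. So the genuinely substantive part of your write-up, the two counterexamples, is content the paper omits entirely. Both examples check out: the indicator of the irrationals is in $JQC(I)$ because $\mathbb{Q}$ is closed under midpoints, and fails $(WQC)$ at rational endpoints with irrational $t$ since $t(x-y)$ is then irrational; and a discontinuous additive $g$ satisfies $g(tx+(1-t)y)+g((1-t)x+ty)=g(x)+g(y)$, hence lies in $W(I)\subset WQC(I)$, while $(QC)$ would force $g\le\max\{g(a),g(b)\}$ on every $[a,b]\subseteq I$, contradicting the unboundedness of nonlinear additive functions on every interval. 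The one caveat worth recording is that both strictness arguments need $I$ to be a nondegenerate interval (so that it contains two distinct rationals and a compact subinterval on which unboundedness bites); under that standing assumption your proof is complete and, for the properness claims, goes beyond what the paper supplies.
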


\begin{theorem}
We have the inlusions%
\begin{equation}
W\left( I\right) \subset WQC\left( I\right) ,\text{ \ \ }C(I)\subset QC(I),%
\text{ \ \ }J(I)\subset JQC(I).  \label{1.5}
\end{equation}%
Each inclusion is proper.
\end{theorem}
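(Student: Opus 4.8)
The plan is to obtain the three containments straight from the definitions and then to exhibit one explicit function that simultaneously witnesses that all three are strict.

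\emph{The inclusions.} For $C(I)\subset QC(I)$: if $f$ is convex, then for $x,y\in I$ and $\lambda\in[0,1]$ one has $f(\lambda x+(1-\lambda)y)\le\lambda f(x)+(1-\lambda)f(y)\le\max\{f(x),f(y)\}$, which is exactly $(QC)$; the inclusion $J(I)\subset JQC(I)$ is the case $\lambda=\tfrac12$ of the same computation. For $W(I)\subset WQC(I)$ I would use the second (equivalent) form of $(WQC)$ recorded in the definition of Wright-quasi-convexity: if $f\in W(I)$ and $x,\,y+\delta\in I$ with $x<y$ and $\delta>0$, then Wright-convexity gives $f(x+\delta)+f(y)\le f(y+\delta)+f(x)\le 2\max\{f(x),f(y+\delta)\}$, and dividing by $2$ is precisely the inequality defining $(WQC)$.

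\emph{Properness.} A single example carries all three cases: let $f:[-1,1]\to\mathbb{R}$ be $f(x)=x^{3}$ (any continuous strictly increasing non-convex function would do equally well). Since $f$ is monotone, every sublevel set $\{x:f(x)\le c\}$ is a subinterval of $[-1,1]$, so $f\in QC([-1,1])$; hence by the inclusions (1.4), $f\in WQC([-1,1])$ and $f\in JQC([-1,1])$. On the other hand $f$ is concave on $[-1,0]$, so $f\notin C([-1,1])$. It is not $J$-convex: $f\!\left(\tfrac{-1+0}{2}\right)=-\tfrac18>-\tfrac12=\tfrac{f(-1)+f(0)}{2}$, violating $(J)$. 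And since $W(I)\subseteq J(I)$ — put $a=x$, $b=\tfrac{x+y}{2}$, $\delta=\tfrac{y-x}{2}$ in the Wright inequality to recover $(J)$ — it follows that $f$ is not Wright-convex either; alternatively one checks this directly, e.g. with $x=-1$, $y=-0.6$, $\delta=0.1$ one gets $f(x+\delta)+f(y)=(-0.729)+(-0.216)=-0.945>-1.125=(-0.125)+(-1)=f(y+\delta)+f(x)$, contradicting the defining inequality of $W(I)$. Thus $f\in QC\cap WQC\cap JQC$ while $f\notin C\cup W\cup J$, so each of the three inclusions is proper.

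\emph{Where the difficulty lies.} There is essentially no obstacle: the forward inclusions are one-line consequences of the definitions, the only mild subtlety being to phrase the Wright-convexity implication through the equivalent form of $(WQC)$. The part that demands a little care is the properness argument — pinning down one function that is quasi-convex (hence automatically $W$- and $J$-quasi-convex) yet fails convexity, Wright-convexity, and $J$-convexity at once. A monotone cubic does this, and the failures of $(W)$ and $(J)$ are confirmed by the short explicit evaluations above.
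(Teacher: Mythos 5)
Your proof is correct, and it is worth noting that the paper itself never proves this statement: it is imported from Dragomir and Pearce \cite{SS1}, and the only related argument in the paper is the proof of the final theorem on the co-ordinated classes, which establishes the analogues of these inclusions in \eqref{2.7}--\eqref{2.8} but is silent on properness. Your inclusion arguments are essentially the same in substance as that proof: $C(I)\subset QC(I)$ and $J(I)\subset JQC(I)$ both follow from bounding an arithmetic mean by the maximum (the paper writes this via the identity $\frac{u+v+\left\vert u-v\right\vert }{2}=\max \left\{ u,v\right\} $, you simply observe $\lambda f(x)+(1-\lambda )f(y)\leq \max \left\{ f(x),f(y)\right\} $), and your derivation of $W(I)\subset WQC(I)$ through the equivalent $\delta $-form of $(WQC)$ is the same computation the paper performs in the co-ordinated setting. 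What you add, and what the paper supplies nowhere, is the properness of all three inclusions: the single witness $f(x)=x^{3}$ on $[-1,1]$ is monotone, hence quasi-convex, hence in $QC\cap WQC\cap JQC$ by \eqref{1.4}; it violates $(J)$ at $x=-1$, $y=0$, hence is not convex; and your observation that $W(I)\subseteq J(I)$ (take $\delta =\frac{y-x}{2}$ in the Wright inequality), confirmed by your explicit numerical check, rules out Wright-convexity as well. All the computations are correct, so your write-up is a complete and economical proof of a claim the paper states without justification.
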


For recent results related to quasi-convex functions see the papers \cite%
{AL1}-\cite{AH} and books \cite{SS2}, \cite{GRE}. In \cite{SS}, Dragomir
defined co-ordinated convex functions and proved following inequalities.

Let us consider the bidimensional interval $\Delta =\left[ a,b\right] \times %
\left[ c,d\right] $ in $%
\mathbb{R}
^{2}$ with $a<b$ and $c<d.$ A function $f:\Delta \rightarrow 
\mathbb{R}
$ will be called convex on the co-ordinates if the partial mappings%
\begin{equation*}
f_{y}:\left[ a,b\right] \rightarrow 
\mathbb{R}
,\text{ \ \ }f_{y}\left( u\right) =f\left( u,y\right)
\end{equation*}%
and%
\begin{equation*}
f_{x}:\left[ c,d\right] \rightarrow 
\mathbb{R}
,\text{ \ \ }f_{x}\left( v\right) =f\left( x,v\right)
\end{equation*}%
are convex where defined for all $y\in \left[ c,d\right] $ and $x\in \left[
a,b\right] .$

Recall that the mapping $f:\Delta \rightarrow 
\mathbb{R}
$ is convex on $\Delta $, if the following inequality;%
\begin{equation}
f\left( \lambda x+\left( 1-\lambda \right) z,\lambda y+\left( 1-\lambda
\right) w\right) \leq \lambda f\left( x,y\right) +\left( 1-\lambda \right)
f\left( z,w\right)  \label{a}
\end{equation}%
holds for all $\left( x,y\right) ,$ $\left( z,w\right) \in \Delta $ and $%
\lambda \in \left[ 0,1\right] .$

\begin{theorem}
(see \cite{SS}, Theorem 1) Suppose that $f:\Delta =\left[ a,b\right] \times %
\left[ c,d\right] \rightarrow 
\mathbb{R}
$ is convex on the co-ordinates on $\Delta .$ Then one has the inequalities;%
\begin{eqnarray}
f\left( \frac{a+b}{2},\frac{c+d}{2}\right) &\leq &\frac{1}{2}\left[ \frac{1}{%
b-a}\dint\nolimits_{a}^{b}f\left( x,\frac{c+d}{2}\right) dx+\frac{1}{d-c}%
\dint\nolimits_{c}^{d}f\left( \frac{a+b}{2},y\right) dy\right]  \notag \\
&\leq &\frac{1}{\left( b-a\right) \left( d-c\right) }\int_{a}^{b}%
\int_{c}^{d}f\left( x,y\right) dydx  \label{1.6} \\
&\leq &\frac{1}{4}\left[ \frac{1}{b-a}\dint\nolimits_{a}^{b}f\left(
x,c\right) dx+\frac{1}{b-a}\dint\nolimits_{a}^{b}f\left( x,d\right) dx\right.
\notag \\
&&\left. \frac{1}{d-c}\dint\nolimits_{c}^{d}f\left( a,y\right) dy+\frac{1}{%
d-c}\dint\nolimits_{c}^{d}f\left( b,y\right) dy\right]  \notag \\
&\leq &\frac{f\left( a,c\right) +f\left( b,c\right) +f\left( a,d\right)
+f\left( b,d\right) }{4}  \notag
\end{eqnarray}%
The above inequalities are sharp.
\end{theorem}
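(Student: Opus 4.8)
The plan is to reduce everything to the one–dimensional Hadamard inequality \eqref{1.1} applied to the partial mappings $f_{x}$ and $f_{y}$, which are convex by the very definition of ``convex on the co-ordinates'', and then to assemble the five–term chain of \eqref{1.6} by integrating and averaging.

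First I would fix $x\in[a,b]$ and apply \eqref{1.1} to the convex function $f_{x}(v)=f(x,v)$ on $[c,d]$, obtaining
\[
f\Bigl(x,\tfrac{c+d}{2}\Bigr)\le \frac{1}{d-c}\int_{c}^{d}f(x,y)\,dy\le \frac{f(x,c)+f(x,d)}{2}.
\]
Integrating this in $x$ over $[a,b]$ and dividing by $b-a$ produces one ``horizontal'' estimate; symmetrically, applying \eqref{1.1} to the convex function $f_{y}(u)=f(u,y)$ on $[a,b]$, then integrating in $y$ and dividing by $d-c$, produces the corresponding ``vertical'' estimate. Adding the two and multiplying by $\tfrac12$ yields simultaneously the second and third inequalities of \eqref{1.6}, once one recognises that the double integral $\frac{1}{(b-a)(d-c)}\int_{a}^{b}\int_{c}^{d}f$ appears as the common middle term (this interchange of iterated integrals is legitimate since $f\in L_{1}(\Delta)$, which is guaranteed by the co-ordinate convexity on the compact rectangle).

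For the leftmost inequality I would use only the left half of \eqref{1.1}, applied to the two convex one–variable maps $u\mapsto f\bigl(u,\tfrac{c+d}{2}\bigr)$ on $[a,b]$ and $v\mapsto f\bigl(\tfrac{a+b}{2},v\bigr)$ on $[c,d]$; averaging the two resulting lower bounds on $f\bigl(\tfrac{a+b}{2},\tfrac{c+d}{2}\bigr)$ gives exactly the first link of the chain. Dually, for the rightmost inequality I would apply the right half of \eqref{1.1} to each of the four convex maps $x\mapsto f(x,c)$, $x\mapsto f(x,d)$, $y\mapsto f(a,y)$, $y\mapsto f(b,y)$, then sum the four bounds with weight $\tfrac14$ to arrive at $\frac{f(a,c)+f(b,c)+f(a,d)+f(b,d)}{4}$. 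Concatenating these four pieces gives \eqref{1.6} in full.

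There is no real analytic obstacle; the only thing requiring care is the bookkeeping — keeping track of which instance of \eqref{1.1} feeds which link of the chain, and checking that the normalising constants ($\tfrac12$ in the middle, $\tfrac14$ on the right) come out correctly when the horizontal and vertical contributions are combined. Sharpness is then immediate: for any affine $f$, for instance $f(x,y)\equiv 1$ (or $f(x,y)=x+y$), every term in \eqref{1.6} equals the same value, so each inequality is attained and none of the constants can be improved.
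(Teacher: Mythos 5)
The paper offers no proof of this statement --- it is recalled verbatim from Dragomir \cite{SS}, where it appears as Theorem 1 --- so there is nothing internal to compare against. Your argument is correct and is essentially Dragomir's original one: each link of the chain comes from the one-dimensional inequality \eqref{1.1} applied to the convex partial mappings, followed by integration over the remaining variable and averaging, and the constants $\tfrac{1}{2}$ and $\tfrac{1}{4}$ work out exactly as you describe; an affine $f$ (or $f(x,y)=xy$ on $[0,1]^{2}$, which is co-ordinate convex but not convex) gives equality throughout, which covers the sharpness claim as well.
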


Similar results can be found in \cite{AL}-\cite{BAK}.

This paper is arranged as follows. Firstly, we will give some definitions on
quasi-convex functions and lemmas belong to this definitions. Secondly, we
will prove several inequalities contain co-ordinated quasi-convex functions.
Also, we will discuss the inclusions a connection with some different
classes of co-ordinated convex functions.

\section{DEFINITIONS AND MAIN\ RESULTS}

We will start the following definitions and lemmas;

\begin{definition}
A function $f:\Delta =\left[ a,b\right] \times \left[ c,d\right] \rightarrow 
\mathbb{R}
$ is said quasi-convex function on the co-ordinates on $\Delta $ if the
following inequality%
\begin{equation*}
f\left( \lambda x+\left( 1-\lambda \right) z,\lambda y+\left( 1-\lambda
\right) w\right) \leq \max \left\{ f\left( x,y\right) ,f\left( z,w\right)
\right\}
\end{equation*}%
holds for all $\left( x,y\right) ,$ $\left( z,w\right) \in \Delta $ and $%
\lambda \in \left[ 0,1\right] $
\end{definition}

$f:\Delta \rightarrow 
\mathbb{R}
$ will be called co-ordinated quasi-convex on the co-ordinates if the
partial mappings%
\begin{equation*}
f_{y}:\left[ a,b\right] \rightarrow 
\mathbb{R}
,\text{ \ \ }f_{y}\left( u\right) =f\left( u,y\right)
\end{equation*}%
and%
\begin{equation*}
f_{x}:\left[ c,d\right] \rightarrow 
\mathbb{R}
,\text{ \ \ }f_{x}\left( v\right) =f\left( x,v\right)
\end{equation*}%
are convex where defined for all $y\in \left[ c,d\right] $ and $x\in \left[
a,b\right] .$ We denote by $QC(\Delta )$ the classes of quasi-convex
functions on the co-ordinates on $\Delta .$ The following lemma holds.

\begin{lemma}
Every quasi-convex mapping $f:\Delta \rightarrow 
\mathbb{R}
$ is quasi-convex on the co-ordinates.
\end{lemma}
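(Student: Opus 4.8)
The plan is to reduce the two-variable statement to the one-variable quasi-convexity hypothesis by freezing one coordinate at a time. Suppose $f:\Delta\to\mathbb{R}$ is quasi-convex in the sense of Definition~5, i.e.
\begin{equation*}
f\left(\lambda x+(1-\lambda)z,\lambda y+(1-\lambda)w\right)\leq\max\left\{f(x,y),f(z,w)\right\}
\end{equation*}
for all $(x,y),(z,w)\in\Delta$ and $\lambda\in[0,1]$. I must show each partial map is quasi-convex on its interval; by symmetry it suffices to treat $f_{y}:[a,b]\to\mathbb{R}$, $f_{y}(u)=f(u,y)$, for an arbitrary fixed $y\in[c,d]$.

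First I would fix $y\in[c,d]$ and take arbitrary $u,v\in[a,b]$ and $\lambda\in[0,1]$. The key step is to apply the hypothesis with the specific choice of points $(x,y)\mapsto(u,y)$ and $(z,w)\mapsto(v,y)$, so that the second coordinate of the convex combination collapses: $\lambda y+(1-\lambda)y=y$. This gives
\begin{equation*}
f_{y}\left(\lambda u+(1-\lambda)v\right)=f\left(\lambda u+(1-\lambda)v,\,y\right)\leq\max\left\{f(u,y),f(v,y)\right\}=\max\left\{f_{y}(u),f_{y}(v)\right\},
\end{equation*}
which is exactly condition $(QC)$ for $f_{y}$ on $[a,b]$. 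The same argument with the roles of the coordinates exchanged — fixing $x\in[a,b]$ and choosing the points $(x,v_{1})$ and $(x,v_{2})$ — shows that $f_{x}:[c,d]\to\mathbb{R}$, $f_{x}(v)=f(x,v)$, is quasi-convex on $[c,d]$.

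Since both families of partial mappings satisfy $(QC)$, $f$ is quasi-convex on the co-ordinates by the definition given after Definition~5, which completes the proof. I do not anticipate any real obstacle here: the only thing to be careful about is that the degenerate convex combination $\lambda y+(1-\lambda)y=y$ is legitimate (it is, since $(u,y)$ and $(v,y)$ genuinely lie in $\Delta$ whenever $u,v\in[a,b]$ and $y\in[c,d]$), and that the inclusion is indeed proper in general, but properness is not asserted in the lemma, so nothing further is needed.
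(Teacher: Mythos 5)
Your proof is correct and follows essentially the same route as the paper's: both arguments fix one coordinate, write it as the degenerate convex combination $\lambda y+(1-\lambda)y$ (resp. $\lambda x+(1-\lambda)x$), and apply the two-variable quasi-convexity inequality to the pair of points sharing that coordinate. The only difference is cosmetic — you treat $f_{y}$ first while the paper treats $f_{x}$ first.
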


\begin{proof}
Suppose that $f:\Delta =\left[ a,b\right] \times \left[ c,d\right]
\rightarrow 
\mathbb{R}
$ is quasi-convex on $\Delta .$ Then the partial mappings 
\begin{equation*}
f_{y}:\left[ a,b\right] \rightarrow 
\mathbb{R}
,\text{ \ \ }f_{y}\left( u\right) =f\left( u,y\right) ,\text{ \ \ }y\in %
\left[ c,d\right]
\end{equation*}%
and%
\begin{equation*}
f_{x}:\left[ c,d\right] \rightarrow 
\mathbb{R}
,\text{ \ \ }f_{x}\left( v\right) =f\left( x,v\right) ,\text{ \ \ }x\in %
\left[ a,b\right] \text{\ }
\end{equation*}%
are convex on $\Delta .$ For $\lambda \in \left[ 0,1\right] $ and $%
v_{1},v_{2}\in \left[ c,d\right] ,$ one has%
\begin{eqnarray*}
f_{x}\left( \lambda v_{1}+\left( 1-\lambda \right) v_{2}\right) &=&f\left(
x,\lambda v_{1}+\left( 1-\lambda \right) v_{2}\right) \\
&=&f\left( \lambda x+\left( 1-\lambda \right) x,\lambda v_{1}+\left(
1-\lambda \right) v_{2}\right) \\
&\leq &\max \left\{ f\left( x,v_{1}\right) ,f\left( x,v_{2}\right) \right\}
\\
&=&\max \left\{ f_{x}\left( v_{1}\right) ,f_{x}\left( v_{2}\right) \right\}
\end{eqnarray*}%
which completes the proof of quasi-convexity of $f_{x}$ on $\left[ c,d\right]
.$ Therefore $f_{y}:\left[ a,b\right] \rightarrow 
\mathbb{R}
,$ \ \ $f_{y}\left( u\right) =f\left( u,y\right) $ is also quasi-convex on $%
\left[ a,b\right] $ for all $y\in \left[ c,d\right] ,$ goes likewise and we
shall omit the details.
\end{proof}

\begin{definition}
A function $f:\Delta =\left[ a,b\right] \times \left[ c,d\right] \rightarrow 
\mathbb{R}
$ is said J-convex function on the co-ordinates on $\Delta $ if the
following inequality%
\begin{equation*}
f\left( \frac{x+z}{2},\frac{y+w}{2}\right) \leq \frac{f\left( x,y\right)
+f\left( z,w\right) }{2}
\end{equation*}%
holds for all $\left( x,y\right) ,$ $\left( z,w\right) \in \Delta .$ We
denote by $J(\Delta )$ the classes of J-convex functions on the co-ordinates
on $\Delta $
\end{definition}

\begin{lemma}
Every J-convex mapping defined $f:\Delta \rightarrow 
\mathbb{R}
$ is J-convex on the co-ordinates.
\end{lemma}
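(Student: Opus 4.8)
The plan is to mimic the proof of the preceding lemma (that every quasi-convex mapping on $\Delta$ is quasi-convex on the co-ordinates), replacing the $\max\{\cdot,\cdot\}$ on the right-hand side by the average $\frac{1}{2}(f(x,y)+f(z,w))$. So I first assume $f:\Delta=[a,b]\times[c,d]\to\mathbb{R}$ is J-convex on $\Delta$, i.e.
\begin{equation*}
f\left(\frac{x+z}{2},\frac{y+w}{2}\right)\leq\frac{f(x,y)+f(z,w)}{2}
\end{equation*}
for all $(x,y),(z,w)\in\Delta$, and I want to show that for each fixed $x\in[a,b]$ the partial map $f_x(v)=f(x,v)$ is J-convex on $[c,d]$, and for each fixed $y\in[c,d]$ the partial map $f_y(u)=f(u,y)$ is J-convex on $[a,b]$.

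The key step is the ``diagonal trick'': for $v_1,v_2\in[c,d]$ write the first coordinate as $x=\frac{x+x}{2}$, so that
\begin{equation*}
f_x\!\left(\frac{v_1+v_2}{2}\right)=f\!\left(x,\frac{v_1+v_2}{2}\right)=f\!\left(\frac{x+x}{2},\frac{v_1+v_2}{2}\right)\leq\frac{f(x,v_1)+f(x,v_2)}{2}=\frac{f_x(v_1)+f_x(v_2)}{2},
\end{equation*}
which is exactly J-convexity of $f_x$ on $[c,d]$. The argument for $f_y$ on $[a,b]$ is symmetric: take $(x,y),(z,y)$ with $u_1=x$, $u_2=z$ and use $y=\frac{y+y}{2}$; I would simply state that it ``goes likewise and we omit the details,'' matching the style of the previous lemma.

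There is essentially no obstacle here: the only thing to be careful about is that $\Delta$ is a rectangle, so the convex combination of admissible points stays in $\Delta$ — but since we only combine points sharing one coordinate, both $(x,v_1)$ and $(x,v_2)$ lie in $\Delta$ and their midpoint $(x,\frac{v_1+v_2}{2})$ does too, because $[c,d]$ is an interval. Hence the hypothesis applies verbatim. I would close by noting that this is the co-ordinate analogue of the trivial fact that J-convexity restricts to lines, and that the converse is false in general, paralleling the remarks made earlier for the quasi-convex case.
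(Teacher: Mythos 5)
Your proposal is correct and follows exactly the paper's own argument: the ``diagonal trick'' of writing $x=\frac{x+x}{2}$ and applying J-convexity of $f$ on $\Delta$ to the pair $(x,v_{1})$, $(x,v_{2})$ is precisely the computation given in the paper, with the symmetric case for $f_{y}$ likewise omitted. No differences worth noting.
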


\begin{proof}
By the partial mappings, we can write for $v_{1},v_{2}\in \left[ c,d\right]
, $%
\begin{eqnarray*}
f_{x}\left( \frac{v_{1}+v_{2}}{2}\right) &=&f\left( x,\frac{v_{1}+v_{2}}{2}%
\right) \\
&=&f\left( \frac{x+x}{2},\frac{v_{1}+v_{2}}{2}\right) \\
&\leq &\frac{f\left( x,v_{1}\right) +f\left( x,v_{2}\right) }{2} \\
&=&\frac{f_{x}\left( v_{1}\right) +f_{x}\left( v_{2}\right) }{2}
\end{eqnarray*}%
which completes the proof of J-convexity of $f_{x}$ on $\left[ c,d\right] .$
Similarly, we can prove J-convexity of $f_{y}$ on $\left[ a,b\right] .$
\end{proof}

\begin{definition}
A function $f:\Delta =\left[ a,b\right] \times \left[ c,d\right] \rightarrow 
\mathbb{R}
$ is said J-quasi-convex function on the co-ordinates on $\Delta $ if the
following inequality%
\begin{equation*}
f\left( \frac{x+z}{2},\frac{y+w}{2}\right) \leq \max \left\{ f\left(
x,y\right) ,f\left( z,w\right) \right\}
\end{equation*}%
holds for all $\left( x,y\right) ,$ $\left( z,w\right) \in \Delta .$ We
denote by $JQC(\Delta )$ the classes of J-quasi-convex functions on the
co-ordinates on $\Delta $
\end{definition}

\begin{lemma}
Every J-quasi-convex mapping defined $f:\Delta \rightarrow 
\mathbb{R}
$ is J-quasi-convex on the co-ordinates.
\end{lemma}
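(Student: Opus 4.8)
The plan is to mimic exactly the pattern used in the proofs of the two preceding lemmas (quasi-convexity on the co-ordinates and J-convexity on the co-ordinates), replacing the relevant inequality by the J-quasi-convex inequality. The statement asserts that if $f:\Delta=[a,b]\times[c,d]\to\mathbb{R}$ satisfies
\[
f\left(\frac{x+z}{2},\frac{y+w}{2}\right)\leq\max\left\{f(x,y),f(z,w)\right\}
\]
for all $(x,y),(z,w)\in\Delta$, then the partial mappings $f_x$ and $f_y$ are J-quasi-convex on $[c,d]$ and $[a,b]$ respectively.

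First I would fix $x\in[a,b]$ and take arbitrary $v_1,v_2\in[c,d]$. Then I would write the chain
\[
f_x\left(\frac{v_1+v_2}{2}\right)=f\left(x,\frac{v_1+v_2}{2}\right)=f\left(\frac{x+x}{2},\frac{v_1+v_2}{2}\right)\leq\max\left\{f(x,v_1),f(x,v_2)\right\}=\max\left\{f_x(v_1),f_x(v_2)\right\},
\]
where the inequality is precisely the J-quasi-convexity hypothesis applied to the pair $(x,v_1),(x,v_2)\in\Delta$. This establishes that $f_x\in JQC([c,d])$ for every $x\in[a,b]$. Then I would remark that the argument for $f_y$ on $[a,b]$ is entirely symmetric — take $u_1,u_2\in[a,b]$, write $f_y\!\left(\frac{u_1+u_2}{2}\right)=f\!\left(\frac{u_1+u_2}{2},\frac{y+y}{2}\right)\leq\max\{f(u_1,y),f(u_2,y)\}$ — so I would just say it "goes likewise" and omit the details, exactly as the paper does for Lemma 1.

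There is essentially no obstacle here: this is a direct verification, and the only "trick" (if one can call it that) is the observation $x=\frac{x+x}{2}$ which lets the one-variable midpoint be rewritten as a genuine two-dimensional midpoint so the hypothesis applies. I would keep the write-up to about four displayed lines plus the "similarly" sentence, matching the style and length of the proofs of the previous two lemmas.

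\begin{proof}
By the partial mappings, for $v_{1},v_{2}\in \left[ c,d\right] $ we can write
\begin{eqnarray*}
f_{x}\left( \frac{v_{1}+v_{2}}{2}\right) &=&f\left( x,\frac{v_{1}+v_{2}}{2}
\right) \\
&=&f\left( \frac{x+x}{2},\frac{v_{1}+v_{2}}{2}\right) \\
&\leq &\max \left\{ f\left( x,v_{1}\right) ,f\left( x,v_{2}\right) \right\} \\
&=&\max \left\{ f_{x}\left( v_{1}\right) ,f_{x}\left( v_{2}\right) \right\}
\end{eqnarray*}
which completes the proof of J-quasi-convexity of $f_{x}$ on $\left[ c,d
\right] .$ Similarly, one shows the J-quasi-convexity of $f_{y}$ on $\left[
a,b\right] ,$ and we shall omit the details.
\end{proof}
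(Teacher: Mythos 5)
Your proof is correct and follows exactly the same route as the paper's: rewrite $f_x\left(\frac{v_1+v_2}{2}\right)$ as $f\left(\frac{x+x}{2},\frac{v_1+v_2}{2}\right)$, apply the J-quasi-convexity of $f$ on $\Delta$ to the pair $(x,v_1),(x,v_2)$, and handle $f_y$ by symmetry. No differences worth noting.
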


\begin{proof}
By a similar way to proof of Lemma 1, we can write for $v_{1},v_{2}\in \left[
c,d\right] ,$%
\begin{eqnarray*}
f_{x}\left( \frac{v_{1}+v_{2}}{2}\right) &=&f\left( x,\frac{v_{1}+v_{2}}{2}%
\right) \\
&=&f\left( \frac{x+x}{2},\frac{v_{1}+v_{2}}{2}\right) \\
&\leq &\max \left\{ f\left( x,v_{1}\right) ,f\left( x,v_{2}\right) \right\}
\\
&=&\max \left\{ f_{x}\left( v_{1}\right) ,f_{x}\left( v_{2}\right) \right\}
\end{eqnarray*}%
which completes the proof of J-quasi-convexity of $f_{x}$ on $\left[ c,d%
\right] .$ We can also prove J-quasi-convexity of $f_{y}$ on $\left[ a,b%
\right] .$
\end{proof}

\begin{definition}
A function $f:\Delta =\left[ a,b\right] \times \left[ c,d\right] \rightarrow 
\mathbb{R}
$ is said Wright-convex function on the co-ordinates on $\Delta $ if the
following inequality%
\begin{equation*}
f\left( \left( 1-t\right) a+tb,\left( 1-s\right) c+sd\right) +f\left(
ta+\left( 1-t\right) b,sc+\left( 1-s\right) d\right) \leq f\left( a,c\right)
+f\left( b,d\right)
\end{equation*}%
holds for all $\left( a,c\right) ,$ $\left( b,d\right) \in \Delta $ and $%
t,s\in \left[ 0,1\right] .$ We denote by $W(\Delta )$ the classes of
Wright-convex functions on the co-ordinates on $\Delta $
\end{definition}

\begin{lemma}
Every Wright-convex mapping defined $f:\Delta \rightarrow 
\mathbb{R}
$ is Wright-convex on the co-ordinates.
\end{lemma}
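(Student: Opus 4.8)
The plan is to follow exactly the template established in Lemmas 1 and 2 (and repeated in the proof of Lemma 3), adapting it to the Wright-convexity inequality. Since Wright-convexity on the co-ordinates is defined via convexity of the two partial mappings $f_x$ and $f_y$, the goal is to show that if $f$ itself satisfies the bivariate Wright-convexity inequality, then each partial mapping $f_x:[c,d]\to\mathbb{R}$, $f_x(v)=f(x,v)$, is a Wright-convex function in the one-variable sense of the second Definition in the Introduction, and likewise for $f_y$.

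First I would fix $x\in[a,b]$ and take $v_1, v_2+\delta\in[c,d]$ with $v_1<v_2$ and $\delta>0$; the aim is to verify $f_x(v_1+\delta)+f_x(v_2)\leq f_x(v_2+\delta)+f_x(v_1)$. To do this I would write each term as a value of $f$ with both arguments equal in the first slot, i.e. $f_x(v_1+\delta)=f(x,v_1+\delta)$, and then re-express the second coordinate in the convex-combination form appearing in the bivariate definition. Concretely, choosing the degenerate combination $(1-t)a+tb = x$ by taking $a=b=x$ (so the first-slot combination collapses to $x$ for every $t$) and matching the second-slot parameters $s$ so that $(1-s)c+sd$ and $sc+(1-s)d$ reproduce the pair $v_2$ and $v_1+\delta$ appropriately, the bivariate Wright inequality $f((1-t)a+tb,(1-s)c+sd)+f(ta+(1-t)b,sc+(1-s)d)\leq f(a,c)+f(b,d)$ reduces to $f(x,v_2)+f(x,v_1+\delta)\leq f(x, \,\cdot\,)+f(x,\,\cdot\,)$ with the right-hand endpoints being $v_1$ and $v_2+\delta$. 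Rewriting in terms of $f_x$ this is precisely $f_x(v_2)+f_x(v_1+\delta)\leq f_x(v_1)+f_x(v_2+\delta)$, which is the Wright-convexity of $f_x$ on $[c,d]$. The argument for $f_y$ on $[a,b]$ is symmetric: fix $y$, collapse the second-slot combination by taking $c=d=y$, and choose $t$ to produce the required pair $u_1, u_2+\gamma$; I would simply remark that it "goes likewise and we omit the details," in keeping with the style of Lemma 1.

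The main obstacle — really a bookkeeping point rather than a deep one — is lining up the parameterization: the bivariate definition is stated with the interval endpoints $a,b,c,d$ playing the role of the two points being combined, whereas one wants it applied to arbitrary sub-points and to the shifted pair $v, v+\delta$. The cleanest route is to first record the elementary observation that the bivariate Wright-convexity inequality, being stated for all $(a,c),(b,d)\in\Delta$ and all $t,s\in[0,1]$, is equivalent to the "shift form" $f(u,v)+f(u+\gamma, w+\delta)\leq f(u+\gamma, v)+f(u, w+\delta)$ for all admissible $u,u+\gamma,v,w+\delta$ (exactly as the one-variable Wright-convex and Wright-quasi-convex definitions in the Introduction each come with an equivalent "$\delta$-form"); once that equivalence is in hand, setting $\gamma=0$ immediately yields Wright-convexity of $f_x$ and setting $\delta=0$ yields that of $f_y$. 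So the proof is two short displayed computations plus this equivalence remark, and I would present it in the same four-line aligned-equation format as the preceding lemmas.
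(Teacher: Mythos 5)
Your proposal is correct and follows essentially the same route as the paper: fix $x$, take the degenerate combination $a=b=x$ so the first slot collapses, apply the bivariate Wright inequality, and read off the Wright-convexity of the partial map $f_x$ (symmetrically for $f_y$). The only cosmetic difference is that you pass through the ``$\delta$-form'' $f_x(v_1+\delta)+f_x(v_2)\leq f_x(v_2+\delta)+f_x(v_1)$ and note its equivalence with the convex-combination form, whereas the paper stays in the combination form $f_x((1-t)v_1+tv_2)+f_x(tv_1+(1-t)v_2)\leq f_x(v_1)+f_x(v_2)$ throughout; your extra equivalence remark actually tidies up a step the paper leaves implicit.
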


\begin{proof}
Suppose that $f:\Delta \rightarrow 
\mathbb{R}
$ is Wright-convex on $\Delta $. Then by partial mapping, for $%
v_{1},v_{2}\in \left[ c,d\right] ,$ $x\in \left[ a,b\right] ,$%
\begin{eqnarray*}
&&f_{x}\left( \left( 1-t\right) v_{1}+tv_{2}\right) +f_{x}\left(
tv_{1}+\left( 1-t\right) v_{2}\right) \\
&=&f\left( x,\left( 1-t\right) v_{1}+tv_{2}\right) +f\left( x,tv_{1}+\left(
1-t\right) v_{2}\right) \\
&=&f\left( \left( 1-t\right) x+tx,\left( 1-t\right) v_{1}+tv_{2}\right)
+f\left( tx+\left( 1-t\right) x,tv_{1}+\left( 1-t\right) v_{2}\right) \\
&\leq &f\left( x,v_{1}\right) +f\left( x,v_{2}\right) \\
&=&f_{x}\left( v_{1}\right) +f_{x}\left( v_{2}\right)
\end{eqnarray*}%
which shows that $f_{x}$ is Wright-convex on $\left[ c,d\right] .$ Similarly
one can see that $f_{y}$ is Wright-convex on $\left[ a,b\right] .$
\end{proof}

\begin{definition}
A function $f:\Delta =\left[ a,b\right] \times \left[ c,d\right] \rightarrow 
\mathbb{R}
$ is said Wright-quasi-convex function on the co-ordinates on $\Delta $ if
the following inequality%
\begin{equation*}
\frac{1}{2}\left[ f\left( tx+\left( 1-t\right) z,ty+\left( 1-t\right)
w\right) +f\left( \left( 1-t\right) x+tz,\left( 1-t\right) y+tw\right) %
\right] \leq \max \left\{ f\left( x,y\right) ,f\left( z,w\right) \right\}
\end{equation*}%
holds for all $\left( x,y\right) ,$ $\left( z,w\right) \in \Delta $ and $%
t\in \left[ 0,1\right] .$ We denote by $WQC(\Delta )$ the classes of
Wright-quasi-convex functions on the co-ordinates on $\Delta $
\end{definition}

\begin{lemma}
Every Wright-quasi-convex mapping defined $f:\Delta \rightarrow 
\mathbb{R}
$ is Wright-quasi-convex on the co-ordinates.
\end{lemma}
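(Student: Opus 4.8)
The plan is to mimic exactly the pattern used in the proofs of Lemmas 1 through 4: reduce the two-dimensional Wright-quasi-convexity to the one-variable inequality by collapsing one coordinate. First I would suppose that $f:\Delta\rightarrow\mathbb{R}$ is Wright-quasi-convex on $\Delta$, so that for all $(x,y),(z,w)\in\Delta$ and $t\in[0,1]$ the displayed $(WQC)$-type inequality in Definition~7 holds. I then fix $x\in[a,b]$ and take the partial mapping $f_x:[c,d]\rightarrow\mathbb{R}$, $f_x(v)=f(x,v)$, and verify it is Wright-quasi-convex in the single-variable sense $(WQC)$.

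The key computation is to write, for $v_1,v_2\in[c,d]$ and $t\in[0,1]$,
\begin{eqnarray*}
&&\frac{1}{2}\left[ f_{x}\left( tv_{1}+\left( 1-t\right) v_{2}\right)
+f_{x}\left( \left( 1-t\right) v_{1}+tv_{2}\right) \right] \\
&=&\frac{1}{2}\left[ f\left( x,tv_{1}+\left( 1-t\right) v_{2}\right)
+f\left( x,\left( 1-t\right) v_{1}+tv_{2}\right) \right] \\
&=&\frac{1}{2}\left[ f\left( tx+\left( 1-t\right) x,tv_{1}+\left(
1-t\right) v_{2}\right) +f\left( \left( 1-t\right) x+tx,\left( 1-t\right)
v_{1}+tv_{2}\right) \right] \\
&\leq &\max \left\{ f\left( x,v_{1}\right) ,f\left( x,v_{2}\right) \right\}
\\
&=&\max \left\{ f_{x}\left( v_{1}\right) ,f_{x}\left( v_{2}\right) \right\},
\end{eqnarray*}
where the inequality is the defining inequality of $WQC(\Delta)$ applied with the pairs $(x,v_1)$ and $(x,v_2)$ and the parameter $t$. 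This shows $f_x\in WQC([c,d])$. The symmetric argument with the roles of the two coordinates interchanged gives $f_y\in WQC([a,b])$ for every $y\in[c,d]$, and one simply writes that this ``goes likewise'' as in Lemma~1.

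I do not anticipate a genuine obstacle here; the only point requiring a moment's care is matching the form of the coordinate Wright-quasi-convex inequality in Definition~7 — which is stated with a symmetric pair of convex-combination arguments $tx+(1-t)z$ and $(1-t)x+tz$ — to the one-variable $(WQC)$ condition from Definition~3, and checking that setting $z=x$ (respectively $w=y$) in the first coordinate collapses the two-dimensional condition to precisely the one-dimensional one. Since the convex combination of $x$ with itself is $x$ regardless of $t$, this collapse is immediate, and the proof is a routine adaptation of the preceding lemmas; accordingly I would keep it brief and omit the symmetric half.
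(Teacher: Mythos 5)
Your proposal is correct and coincides essentially line for line with the paper's own proof: both collapse the first coordinate by writing $x = tx+(1-t)x$, apply the defining inequality of $WQC(\Delta)$ to the pairs $(x,v_1)$ and $(x,v_2)$, and dispatch the $f_y$ case by symmetry. No differences worth noting.
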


\begin{proof}
Suppose that $f:\Delta \rightarrow 
\mathbb{R}
$ is Wright-quasi-convex on $\Delta $. Then by partial mapping, for $%
v_{1},v_{2}\in \left[ c,d\right] ,$%
\begin{eqnarray*}
&&\frac{1}{2}\left[ f_{x}\left( tv_{1}+\left( 1-t\right) v_{2}\right)
+f_{x}\left( \left( 1-t\right) v_{1}+tv_{2}\right) \right] \\
&=&\frac{1}{2}\left[ f\left( x,tv_{1}+\left( 1-t\right) v_{2}\right)
+f\left( x,\left( 1-t\right) v_{1}+tv_{2}\right) \right] \\
&=&\frac{1}{2}\left[ f\left( tx+\left( 1-t\right) x,tv_{1}+\left( 1-t\right)
v_{2}\right) +f\left( \left( 1-t\right) x+tx,\left( 1-t\right)
v_{1}+tv_{2}\right) \right] \\
&\leq &\max \left\{ f\left( x,v_{1}\right) ,f\left( x,v_{2}\right) \right\}
\\
&=&\max \left\{ f_{x}\left( v_{1}\right) ,f_{x}\left( v_{2}\right) \right\}
\end{eqnarray*}%
which shows that $f_{x}$ is Wright-quasi-convex on $\left[ c,d\right] .$
Similarly one can see that $f_{y}$ is Wright-quasi-convex on $\left[ a,b%
\right] .$
\end{proof}

\begin{theorem}
Suppose that $f:\Delta =\left[ a,b\right] \times \left[ c,d\right]
\rightarrow 
\mathbb{R}
$ is J-quasi-convex on the co-ordinates on $\Delta .$ If $f_{x}\in L_{1}%
\left[ c,d\right] $ and $f_{y}\in L_{1}\left[ a,b\right] ,$ then we have the
inequality;%
\begin{eqnarray}
&&\frac{1}{2}\left[ \frac{1}{b-a}\dint\nolimits_{a}^{b}f\left( x,\frac{c+d}{2%
}\right) dx+\frac{1}{d-c}\int_{c}^{d}f\left( \frac{a+b}{2},y\right) dy\right]
\label{2.1} \\
&\leq &\frac{1}{\left( b-a\right) \left( d-c\right) }\int_{c}^{d}\dint%
\nolimits_{a}^{b}f(x,y)dxdy+H(x,y)  \notag
\end{eqnarray}%
where%
\begin{eqnarray*}
H\left( x,y\right) &=&\frac{1}{4\left( d-c\right) }\int_{c}^{d}\int_{0}^{1}%
\left\vert f\left( ta+\left( 1-t\right) b,y\right) -f\left( \left(
1-t\right) a+tb,y\right) \right\vert dtdy \\
&&+\frac{1}{4\left( b-a\right) }\dint\nolimits_{a}^{b}\int_{0}^{1}\left\vert
f\left( x,tc+\left( 1-t\right) d\right) -f\left( x,\left( 1-t\right)
c+td\right) \right\vert dtdx.
\end{eqnarray*}
\end{theorem}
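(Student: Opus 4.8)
The plan is to apply the one-variable inequality \eqref{1.2} to each of the partial mappings of $f$ and then to integrate out the frozen variable and average the two resulting estimates.

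First I would extract from the hypothesis that each partial mapping is J-quasi-convex in the ordinary one-variable sense. Indeed, setting $w=y$ in the defining inequality of a J-quasi-convex function on the co-ordinates gives $f\!\left(\tfrac{x+z}{2},y\right)\le\max\{f(x,y),f(z,y)\}$, i.e.\ $f_{y}\!\left(\tfrac{x+z}{2}\right)\le\max\{f_{y}(x),f_{y}(z)\}$, so $f_{y}\in JQC[a,b]$ for every $y\in[c,d]$; setting $x=z$ gives $f_{x}\in JQC[c,d]$ for every $x\in[a,b]$. Together with the assumptions $f_{y}\in L_{1}[a,b]$ and $f_{x}\in L_{1}[c,d]$, this puts us exactly in the setting where \eqref{1.2} is available for each partial mapping.

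Next, for a fixed $y\in[c,d]$, applying \eqref{1.2} to $f_{y}$ on $[a,b]$ gives
\[
f\!\left(\frac{a+b}{2},y\right)\le\frac{1}{b-a}\int_{a}^{b}f(x,y)\,dx+\frac12\int_{0}^{1}\left\vert f(ta+(1-t)b,y)-f((1-t)a+tb,y)\right\vert dt .
\]
Integrating in $y$ over $[c,d]$, dividing by $d-c$ and using Fubini on the remainder yields
\[
\frac{1}{d-c}\int_{c}^{d}f\!\left(\frac{a+b}{2},y\right)dy\le\frac{1}{(b-a)(d-c)}\int_{c}^{d}\int_{a}^{b}f(x,y)\,dx\,dy+\frac{1}{2(d-c)}\int_{c}^{d}\int_{0}^{1}\left\vert f(ta+(1-t)b,y)-f((1-t)a+tb,y)\right\vert dt\,dy .
\]
In the same way, applying \eqref{1.2} to $f_{x}$ on $[c,d]$ for fixed $x\in[a,b]$, integrating in $x$ over $[a,b]$ and dividing by $b-a$, gives
\[
\frac{1}{b-a}\int_{a}^{b}f\!\left(x,\frac{c+d}{2}\right)dx\le\frac{1}{(b-a)(d-c)}\int_{a}^{b}\int_{c}^{d}f(x,y)\,dy\,dx+\frac{1}{2(b-a)}\int_{a}^{b}\int_{0}^{1}\left\vert f(x,tc+(1-t)d)-f(x,(1-t)c+td)\right\vert dt\,dx .
\]
Adding these two inequalities and dividing by $2$ then finishes the argument: the two leading terms on the right coincide (by Fubini) and average to $\frac{1}{(b-a)(d-c)}\int_{c}^{d}\int_{a}^{b}f(x,y)\,dx\,dy$, while the two remainder terms, each already carrying a factor $\tfrac12$, pick up one more factor $\tfrac12$ and combine into exactly $H(x,y)$, which is \eqref{2.1}.

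The computation is essentially bookkeeping, so the only point that needs care is the measurability/integrability required to integrate the one-variable estimates in the frozen variable: one uses $f_{x}\in L_{1}[c,d]$, $f_{y}\in L_{1}[a,b]$ and the (implicit) joint measurability of $f$ on $\Delta$ to guarantee that the right-hand sides above are integrable, and Fubini's theorem to rearrange the iterated integrals of the absolute-value terms into the order displayed in $H$. I do not expect any genuine obstacle beyond this.
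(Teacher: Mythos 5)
Your proposal is correct and follows essentially the same route as the paper: reduce to the one-variable inequality \eqref{1.2} for the partial mappings $f_{y}$ and $f_{x}$, integrate each resulting estimate over the frozen variable, normalize, and average the two inequalities so that the remainder terms acquire the factors $\tfrac{1}{4(d-c)}$ and $\tfrac{1}{4(b-a)}$ appearing in $H$. Your explicit justification that the partial mappings are J-quasi-convex (by specializing $w=y$ and $x=z$) and your attention to Fubini are slightly more careful than the paper's presentation, but the argument is the same.
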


\begin{proof}
Since $f:\Delta \rightarrow 
\mathbb{R}
$ is J-quasi-convex on the co-ordinates on $\Delta .$ We can write the
partial mappings%
\begin{equation*}
f_{y}:\left[ a,b\right] \rightarrow 
\mathbb{R}
,\text{ \ \ }f_{y}\left( u\right) =f\left( u,y\right) ,\text{ \ \ }y\in %
\left[ c,d\right]
\end{equation*}%
and%
\begin{equation*}
f_{x}:\left[ c,d\right] \rightarrow 
\mathbb{R}
,\text{ \ \ }f_{x}\left( v\right) =f\left( x,v\right) ,\text{ \ \ }x\in %
\left[ a,b\right]
\end{equation*}%
are J-quasi-convex on $\Delta .$ Then by the inequality (\ref{1.2}), we have%
\begin{equation*}
f_{y}\left( \frac{a+b}{2}\right) \leq \frac{1}{b-a}\dint%
\nolimits_{a}^{b}f_{y}(x)dx+\frac{1}{2}\int_{0}^{1}\left\vert f_{y}\left(
ta+\left( 1-t\right) b\right) -f_{y}\left( \left( 1-t\right) a+tb\right)
\right\vert dt.
\end{equation*}%
That is%
\begin{equation*}
f\left( \frac{a+b}{2},y\right) \leq \frac{1}{b-a}\dint%
\nolimits_{a}^{b}f(x,y)dx+\frac{1}{2}\int_{0}^{1}\left\vert f\left(
ta+\left( 1-t\right) b,y\right) -f\left( \left( 1-t\right) a+tb,y\right)
\right\vert dt.
\end{equation*}%
Integrating the resulting inequality with respect to $y$ over $\left[ c,d%
\right] $ and dividing both sides of inequality with $\left( d-c\right) ,$
we get%
\begin{eqnarray}
&&\frac{1}{d-c}\int_{c}^{d}f\left( \frac{a+b}{2},y\right) dy  \label{2.2} \\
&\leq &\frac{1}{\left( b-a\right) \left( d-c\right) }\int_{c}^{d}\dint%
\nolimits_{a}^{b}f(x,y)dxdy  \notag \\
&&+\frac{1}{2\left( d-c\right) }\int_{c}^{d}\int_{0}^{1}\left\vert f\left(
ta+\left( 1-t\right) b,y\right) -f\left( \left( 1-t\right) a+tb,y\right)
\right\vert dtdy.  \notag
\end{eqnarray}%
By a similar argument, we have%
\begin{eqnarray}
&&\frac{1}{b-a}\dint\nolimits_{a}^{b}f\left( x,\frac{c+d}{2}\right) dx
\label{2.3} \\
&\leq &\frac{1}{\left( b-a\right) \left( d-c\right) }\dint\nolimits_{a}^{b}%
\dint\nolimits_{c}^{d}f(x,y)dydx  \notag \\
&&+\frac{1}{2\left( b-a\right) }\dint\nolimits_{a}^{b}\int_{0}^{1}\left\vert
f\left( x,tc+\left( 1-t\right) d\right) -f\left( x,\left( 1-t\right)
c+td\right) \right\vert dtdx.  \notag
\end{eqnarray}%
Summing (\ref{2.2}) and (\ref{2.3}), we get the required result.
\end{proof}

\begin{theorem}
Suppose that $f:\Delta =\left[ a,b\right] \times \left[ c,d\right]
\rightarrow 
\mathbb{R}
$ is Wright-quasi-convex on the co-ordinates on $\Delta .$ If $f_{x}\in L_{1}%
\left[ c,d\right] $ and $f_{y}\in L_{1}\left[ a,b\right] ,$ then we have the
inequality;%
\begin{eqnarray}
&&\frac{1}{\left( b-a\right) \left( d-c\right) }\dint\nolimits_{c}^{d}\dint%
\nolimits_{a}^{b}f(x,y)dxdy  \label{2.4} \\
&\leq &\frac{1}{2}\left[ \max \left\{ \frac{1}{\left( b-a\right) }%
\dint\nolimits_{a}^{b}f(x,c)dx,\frac{1}{\left( b-a\right) }%
\dint\nolimits_{a}^{b}f(x,d)dx\right\} \right.  \notag \\
&&\left. +\max \left\{ \frac{1}{\left( d-c\right) }\dint%
\nolimits_{c}^{d}f(a,y)dy,\frac{1}{\left( d-c\right) }\dint%
\nolimits_{c}^{d}f(b,y)dy\right\} \right] .  \notag
\end{eqnarray}
\end{theorem}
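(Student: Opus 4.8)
The plan is to reduce the two-dimensional estimate to the one-dimensional Wright-quasi-convex Hadamard inequality (\ref{1.3}), applied in each of the two variables separately, and then to average the two bounds so obtained. First I would invoke the Lemma preceding the statement: since $f$ is Wright-quasi-convex on the co-ordinates, for each fixed $y\in\left[ c,d\right] $ the partial map $f_{y}(u)=f(u,y)$ is Wright-quasi-convex on $\left[ a,b\right] $, and for each fixed $x\in\left[ a,b\right] $ the partial map $f_{x}(v)=f(x,v)$ is Wright-quasi-convex on $\left[ c,d\right] $; by the integrability hypotheses both lie in $L_{1}$ of the relevant interval. Introduce the marginals
\[
F(y)=\frac{1}{b-a}\int_{a}^{b}f(x,y)\,dx,\qquad G(x)=\frac{1}{d-c}\int_{c}^{d}f(x,y)\,dy,
\]
so that, by Fubini's theorem,
\[
\frac{1}{d-c}\int_{c}^{d}F(y)\,dy=\frac{1}{b-a}\int_{a}^{b}G(x)\,dx=\frac{1}{(b-a)(d-c)}\int_{c}^{d}\int_{a}^{b}f(x,y)\,dx\,dy .
\]

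Next I would show that $F$ is Wright-quasi-convex on $\left[ c,d\right] $ and $G$ is Wright-quasi-convex on $\left[ a,b\right] $. Granting this, applying (\ref{1.3}) to $F$ on $\left[ c,d\right] $ and to $G$ on $\left[ a,b\right] $ gives
\[
\frac{1}{d-c}\int_{c}^{d}F(y)\,dy\leq\max\{F(c),F(d)\},\qquad \frac{1}{b-a}\int_{a}^{b}G(x)\,dx\leq\max\{G(a),G(b)\}.
\]
Since $F(c)=\frac{1}{b-a}\int_{a}^{b}f(x,c)\,dx$, $F(d)=\frac{1}{b-a}\int_{a}^{b}f(x,d)\,dx$, $G(a)=\frac{1}{d-c}\int_{c}^{d}f(a,y)\,dy$ and $G(b)=\frac{1}{d-c}\int_{c}^{d}f(b,y)\,dy$, adding the two displayed inequalities, dividing by $2$, and using the identity from the first paragraph to rewrite both left-hand sides as the double mean of $f$ produces exactly (\ref{2.4}).

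The step I expect to be the real obstacle is the claim that the marginals $F$ and $G$ are themselves Wright-quasi-convex. Writing the defining inequality for $F$ at points $y_{1},y_{2}\in\left[ c,d\right] $ and a parameter $t\in\left[ 0,1\right] $, and using Wright-quasi-convexity of the sections $f_{x}$ under the $x$-integral, reduces matters to
\[
\frac{1}{b-a}\int_{a}^{b}\max\{f(x,y_{1}),f(x,y_{2})\}\,dx\leq\max\{F(y_{1}),F(y_{2})\},
\]
that is, to interchanging the mean in $x$ with the maximum; the only general fact at hand, $\int\max\geq\max\int$, points the wrong way, and this is the two-dimensional counterpart of the nonnegative correction term $H(x,y)$ that had to be retained in the $J$-quasi-convex theorem above. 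If that interchange cannot be secured under the present hypotheses, the fall-back is to apply (\ref{1.3}) directly to the sections $f_{y}$ and $f_{x}$, integrate in the remaining variable, and average; that is what the one-dimensional inequality literally delivers, and it yields (\ref{2.4}) with $\frac{1}{d-c}\int_{c}^{d}\max\{f(a,y),f(b,y)\}\,dy$ and $\frac{1}{b-a}\int_{a}^{b}\max\{f(x,c),f(x,d)\}\,dx$ in place of the two maxima of means.
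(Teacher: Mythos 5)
Your overall strategy is the same as the paper's: apply the one--dimensional Wright-quasi-convex Hadamard inequality (\ref{1.3}) to the sections $f_{y}$ and $f_{x}$ (via the lemma that co-ordinate Wright-quasi-convexity gives Wright-quasi-convex sections), integrate in the remaining variable, and average the two bounds. The obstacle you isolated is not a defect of your write-up but the precise point at which the paper's own proof fails. Integrating the pointwise inequality $\frac{1}{b-a}\int_{a}^{b}f(x,y)\,dx\leq \max \left\{ f(a,y),f(b,y)\right\} $ over $y$ yields only
\begin{equation*}
\frac{1}{\left( b-a\right) \left( d-c\right) }\int_{c}^{d}\int_{a}^{b}f(x,y)\,dx\,dy\leq \frac{1}{d-c}\int_{c}^{d}\max \left\{ f(a,y),f(b,y)\right\} dy,
\end{equation*}
whereas (\ref{2.5}) replaces the right-hand side by $\max \left\{ \frac{1}{d-c}\int_{c}^{d}f(a,y)dy,\frac{1}{d-c}\int_{c}^{d}f(b,y)dy\right\} $. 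Since the integral of a maximum dominates the maximum of the integrals, that replacement is an inequality in the wrong direction; the same unjustified step produces (\ref{2.6}). Likewise your primary route cannot be repaired: the marginals $F$ and $G$ need not be Wright-quasi-convex, for exactly the same interchange reason.

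In fact (\ref{2.4}) is false under the stated hypotheses, so no argument can close the gap. On $\Delta =[0,1]^{2}$ let $f\equiv 1$ except that $f(0,y)=0$ for $y>\frac{1}{2}$ and $f(1,y)=0$ for $y<\frac{1}{2}$. Each section $f(\cdot ,y)$ equals $1$ off a single endpoint and is Wright-quasi-convex; each section $f(x,\cdot )$ is constant or a monotone step function, hence quasi-convex and a fortiori Wright-quasi-convex; all sections are integrable. The double mean equals $1$, while $\frac{1}{d-c}\int_{c}^{d}f(0,y)dy=\frac{1}{d-c}\int_{c}^{d}f(1,y)dy=\frac{1}{2}$ and $\frac{1}{b-a}\int_{a}^{b}f(x,0)dx=\frac{1}{b-a}\int_{a}^{b}f(x,1)dx=1$, so the right-hand side of (\ref{2.4}) is $\frac{3}{4}<1$. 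The correct conclusion obtainable from (\ref{1.3}) is your fall-back version, with $\frac{1}{d-c}\int_{c}^{d}\max \left\{ f(a,y),f(b,y)\right\} dy$ and $\frac{1}{b-a}\int_{a}^{b}\max \left\{ f(x,c),f(x,d)\right\} dx$ in place of the two maxima of means; your instinct to stop there was sound.
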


\begin{proof}
Since $f:\Delta \rightarrow 
\mathbb{R}
$ is Wright-quasi-convex on the co-ordinates on $\Delta .$ We can write the
partial mappings%
\begin{equation*}
f_{y}:\left[ a,b\right] \rightarrow 
\mathbb{R}
,\text{ \ \ }f_{y}\left( u\right) =f\left( u,y\right) ,\text{ \ \ }y\in %
\left[ c,d\right]
\end{equation*}%
and%
\begin{equation*}
f_{x}:\left[ c,d\right] \rightarrow 
\mathbb{R}
,\text{ \ \ }f_{x}\left( v\right) =f\left( x,v\right) ,\text{ \ \ }x\in %
\left[ a,b\right]
\end{equation*}%
are Wright-quasi-convex on $\Delta .$ Then by the inequality (\ref{1.3}), we
have%
\begin{equation*}
\frac{1}{b-a}\dint\nolimits_{a}^{b}f_{y}(x)dx\leq \max \left\{
f_{y}(a),f_{y}(b)\right\} .
\end{equation*}%
That is%
\begin{equation*}
\frac{1}{b-a}\dint\nolimits_{a}^{b}f(x,y)dx\leq \max \left\{
f(a,y),f(b,y)\right\} .
\end{equation*}%
Dividing both sides of inequality with $\left( d-c\right) $ and integrating
with respect to $y$ over $\left[ c,d\right] $ $,$ we get%
\begin{equation}
\frac{1}{\left( b-a\right) \left( d-c\right) }\dint\nolimits_{c}^{d}\dint%
\nolimits_{a}^{b}f(x,y)dxdy\leq \max \left\{ \frac{1}{\left( d-c\right) }%
\dint\nolimits_{c}^{d}f(a,y)dy,\frac{1}{\left( d-c\right) }%
\dint\nolimits_{c}^{d}f(b,y)dy\right\} .  \label{2.5}
\end{equation}%
By a similar argument, we can write%
\begin{equation}
\frac{1}{\left( b-a\right) \left( d-c\right) }\dint\nolimits_{c}^{d}\dint%
\nolimits_{a}^{b}f(x,y)dxdy\leq \max \left\{ \frac{1}{\left( b-a\right) }%
\dint\nolimits_{a}^{b}f(x,c)dx,\frac{1}{\left( b-a\right) }%
\dint\nolimits_{a}^{b}f(x,d)dx\right\} .  \label{2.6}
\end{equation}%
By addition (\ref{2.5}) and (\ref{2.6}), we have%
\begin{eqnarray*}
&&\frac{1}{\left( b-a\right) \left( d-c\right) }\dint\nolimits_{c}^{d}\dint%
\nolimits_{a}^{b}f(x,y)dxdy \\
&\leq &\frac{1}{2}\left[ \max \left\{ \frac{1}{\left( b-a\right) }%
\dint\nolimits_{a}^{b}f(x,c)dx,\frac{1}{\left( b-a\right) }%
\dint\nolimits_{a}^{b}f(x,d)dx\right\} \right. \\
&&\left. +\max \left\{ \frac{1}{\left( d-c\right) }\dint%
\nolimits_{c}^{d}f(a,y)dy,\frac{1}{\left( d-c\right) }\dint%
\nolimits_{c}^{d}f(b,y)dy\right\} \right]
\end{eqnarray*}%
which completes the proof.
\end{proof}

\begin{theorem}
Let $C\left( \Delta \right) ,$ $J\left( \Delta \right) ,$ $W\left( \Delta
\right) ,$ $QC\left( \Delta \right) ,$ $JQC\left( \Delta \right) ,$ $%
WQC\left( \Delta \right) $ denote the classes of functions co-ordinated
convex, co-ordinated J-convex, co-ordinated W-convex, co-ordinated
quasi-convex, co-ordinated J-quasi-convex and co-ordinated W-quasi-convex
functions on $\Delta =\left[ a,b\right] \times \left[ c,d\right] $,
respectively, we have following inclusions.%
\begin{equation}
QC\left( \Delta \right) \subset WQC\left( \Delta \right) \subset JQC\left(
\Delta \right)  \label{2.7}
\end{equation}%
\begin{equation}
W\left( \Delta \right) \subset WQC\left( \Delta \right) ,\text{ \ \ }C\left(
\Delta \right) \subset J\left( \Delta \right) ,\text{ \ \ }J\left( \Delta
\right) \subset JQC\left( \Delta \right) .  \label{2.8}
\end{equation}
\end{theorem}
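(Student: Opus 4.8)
\noindent \emph{Proof outline.} The plan is to verify the five inclusions one at a time, in each case by a short pointwise manipulation that feeds the defining inequality of the \emph{larger} class with suitably specialized parameters; this is the two-dimensional mirror of the arguments behind the one-dimensional inclusions (\ref{1.4}) and (\ref{1.5}) of Dragomir and Pearce. Throughout I fix arbitrary $(x,y),(z,w)\in\Delta$ and $t\in[0,1]$, and use only the elementary facts that $\tfrac{A+B}{2}\le\max\{A,B\}$ and that the average of two copies of the same bound is again that bound.

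For (\ref{2.7}): given $f\in QC(\Delta)$, I apply its quasi-convexity inequality twice, once with weight $\lambda=t$ and once with $\lambda=1-t$, add the two estimates and divide by $2$; since both right-hand sides equal $\max\{f(x,y),f(z,w)\}$, this yields
\[
\tfrac12\bigl[f(tx+(1-t)z,\,ty+(1-t)w)+f((1-t)x+tz,\,(1-t)y+tw)\bigr]\le\max\{f(x,y),f(z,w)\},
\]
which is exactly membership in $WQC(\Delta)$. For $WQC(\Delta)\subset JQC(\Delta)$ I simply put $t=\tfrac12$ in the $WQC(\Delta)$ inequality: the two arguments on the left collapse to $\bigl(\tfrac{x+z}{2},\tfrac{y+w}{2}\bigr)$ and the factor $\tfrac12$ cancels, which is the $JQC(\Delta)$ inequality.

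For (\ref{2.8}): to obtain $W(\Delta)\subset WQC(\Delta)$ I specialize the two-parameter Wright inequality to $s=t$ and relabel the corner points $(a,c),(b,d)$ as $(x,y),(z,w)$, so that its left-hand side becomes the same sum of two function values as in the display above; bounding its right-hand side $f(x,y)+f(z,w)$ by $2\max\{f(x,y),f(z,w)\}$ and halving gives the $WQC(\Delta)$ inequality. For $C(\Delta)\subset J(\Delta)$ I set $\lambda=\tfrac12$ in the convexity inequality (\ref{a}). For $J(\Delta)\subset JQC(\Delta)$ I bound the right-hand side $\tfrac{f(x,y)+f(z,w)}{2}$ of the $J(\Delta)$ inequality by $\max\{f(x,y),f(z,w)\}$. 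Collecting these displays proves (\ref{2.7}) and (\ref{2.8}).

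I do not expect a genuine obstacle: every step is a one-line computation. The only point requiring care is $W(\Delta)\subset WQC(\Delta)$, where one must check that killing the second Wright parameter ($s=t$) and correctly matching the corner points reproduces precisely the $t$- and $(1-t)$-convex combinations that appear in the definition of $WQC(\Delta)$. If one prefers to argue via partial mappings rather than the global inequalities, one should instead read each class through its co-ordinate characterization (Lemmas~1--5) and apply the one-dimensional inclusions of Dragomir and Pearce verbatim to $f_x$ and $f_y$; that route works equally well, provided the various classes are interpreted consistently.
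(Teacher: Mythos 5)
Your proposal is correct and follows essentially the same route as the paper: each inclusion is obtained by a one-line pointwise manipulation of the defining inequality (applying quasi-convexity with weights $t$ and $1-t$ and averaging for $QC(\Delta)\subset WQC(\Delta)$, setting $t=\tfrac12$ for $WQC(\Delta)\subset JQC(\Delta)$ and $C(\Delta)\subset J(\Delta)$, and bounding the arithmetic mean by the maximum for the remaining two). Your explicit remark that one must take $s=t$ in the two-parameter Wright inequality is a small point the paper glosses over, but it does not change the argument.
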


\begin{proof}
Let $f\in QC\left( \Delta \right) .$ Then for all $\left( x,y\right) ,$ $%
\left( z,w\right) \in \Delta $ and $t\in \left[ 0,1\right] ,$ we have%
\begin{equation*}
f\left( \lambda x+\left( 1-\lambda \right) z,\lambda y+\left( 1-\lambda
\right) w\right) \leq \max \left\{ f\left( x,y\right) ,f\left( z,w\right)
\right\}
\end{equation*}%
\begin{equation*}
f\left( \left( 1-\lambda \right) x+\lambda z,\left( 1-\lambda \right)
y+\lambda w\right) \leq \max \left\{ f\left( x,y\right) ,f\left( z,w\right)
\right\} .
\end{equation*}%
By addition, we obtain%
\begin{eqnarray}
&&\frac{1}{2}\left[ f\left( \lambda x+\left( 1-\lambda \right) z,\lambda
y+\left( 1-\lambda \right) w\right) +f\left( \left( 1-\lambda \right)
x+\lambda z,\left( 1-\lambda \right) y+\lambda w\right) \right]  \label{2.9}
\\
&\leq &\max \left\{ f\left( x,y\right) ,f\left( z,w\right) \right\}  \notag
\end{eqnarray}%
that is, $f\in WQC\left( \Delta \right) .$ In (\ref{2.9}), if we choose $%
\lambda =\frac{1}{2},$ we obtain $WQC\left( \Delta \right) \subset JQC\left(
\Delta \right) .$ Which completes the proof of (\ref{2.7}).

In order to prove (\ref{2.8}), taking $f\in W\left( \Delta \right) $ and
using the definition, we get%
\begin{equation*}
\frac{1}{2}\left[ f\left( \left( 1-t\right) a+tb,\left( 1-s\right)
c+sd\right) +f\left( ta+\left( 1-t\right) b,sc+\left( 1-s\right) d\right) %
\right] \leq \frac{f\left( a,c\right) +f\left( b,d\right) }{2}
\end{equation*}%
for all $\left( a,c\right) ,\left( b,d\right) \in \Delta $ and $t\in \left[
0,1\right] .$ Using the fact that%
\begin{equation*}
\frac{f\left( a,c\right) +f\left( b,d\right) +\left\vert f\left( a,c\right)
-f\left( b,d\right) \right\vert }{2}=\max \left\{ f(a,c),f(b,d)\right\}
\end{equation*}%
we can write 
\begin{equation*}
\frac{f\left( a,c\right) +f\left( b,d\right) }{2}\leq \max \left\{
f(a,c),f(b,d)\right\}
\end{equation*}%
for all $\left( a,c\right) ,\left( b,d\right) \in \Delta ,$ we obtain $%
W\left( \Delta \right) \subset WQC\left( \Delta \right) .$

Taking $f\in C\left( \Delta \right) $ and, if we choose $t=\frac{1}{2}$ in (%
\ref{a})$,$ we obtain 
\begin{equation*}
f\left( \frac{x+z}{2},\frac{y+w}{2}\right) \leq \frac{f\left( x,y\right)
+f\left( z,w\right) }{2}
\end{equation*}%
for all $\left( x,y\right) ,$ $\left( z,w\right) \in \Delta .$ One can see
that $C\left( \Delta \right) \subset J\left( \Delta \right) .$

Taking $f\in J\left( \Delta \right) ,$ we can write%
\begin{equation*}
f\left( \frac{x+z}{2},\frac{y+w}{2}\right) \leq \frac{f\left( x,y\right)
+f\left( z,w\right) }{2}
\end{equation*}%
for all $\left( x,y\right) ,$ $\left( z,w\right) \in \Delta .$ Using the
fact that%
\begin{equation*}
\frac{f\left( x,y\right) +f\left( z,w\right) +\left\vert f\left( x,y\right)
-f\left( z,w\right) \right\vert }{2}=\max \left\{ f(x,y),f(z,w)\right\}
\end{equation*}%
we can write 
\begin{equation*}
\frac{f\left( x,y\right) +f\left( z,w\right) }{2}\leq \max \left\{ f\left(
x,y\right) ,f\left( z,w\right) \right\} .
\end{equation*}%
Then obviously, we obtain%
\begin{equation*}
f\left( \frac{x+z}{2},\frac{y+w}{2}\right) \leq \max \left\{ f\left(
x,y\right) ,f\left( z,w\right) \right\}
\end{equation*}%
which shows that $f\in JQ\left( \Delta \right) .$
\end{proof}


\begin{thebibliography}{99}
\bibitem{AL1} Alomari, M. and Darus, M., Dragomir, S.S., Inequalities of
Hermite-Hadamard's type for functions whose derivatives absolute values are
quasi-convex, RGMIA Res. Rep. Coll., 12 (2009), Supplement, Article 14.

\bibitem{AL2} Alomari, M. and Darus, M. and Dragomir, S.S., New inequalities
of Hermite-Hadamard type for functions whose second derivatives absolute
values are quasi-convex, RGMIA Res. Rep. Coll., 12 (2009), Supplement,
Article 17.

\bibitem{AL3} Alomari, M., Darus, M. and Kirmac\i , U.S., Refinements of
Hadamard-type inequalities for quasi-convex functions with applications to
trapezoidal formula and to special means, Computers and Mathematics with
Applications, 59 (2010), 225-232.

\bibitem{AL4} Alomari, M. and Darus, M., On some inequalities Simpson-type
via quasi-convex functions with applications, RGMIA Res. Rep. Coll., 13
(2010), 1, Article 8.

\bibitem{AL5} Alomari, M. and Darus, M., Some Ostrowski type inequalities
for quasi-convex functions with applications to special means, RGMIA Res.
Rep. Coll., 13 (2010), 2, Article 3.

\bibitem{SS1} Dragomir, S.S. and Pearce, C. E. M., Quasi-convex functions
and Hadamard's inequality, Bull. Austral. Math. Soc., 57 (1998), 377-385.

\bibitem{ION} Ion, D.A., Some estimates on the Hermite-Hadamard inequality
through quasi-convex functions, Annals of University of Craiova, Math. Comp.
Sci. Ser., 34 (2007), 82-87.

\bibitem{SET2} Set, E., \"{O}zdemir, M.E. and Sar\i kaya, M.Z., On new
inequalities of Simpson's type for quasi-convex functions with applications,
RGMIA Res. Rep. Coll., 13 (2010), 1, Article 6.

\bibitem{ZEK1} Sar\i kaya, M.Z., Sa\u{g}lam, A. and Y\i ld\i r\i m, H., New
inequalities of Hermite-Hadamard type for functions whose second derivatives
absolute values are convex and quasi-convex, Arxiv:1005.0451v1 (2010).

\bibitem{TSENG} Tseng, K.L., Yang, G.S. and Dragomir, S.S., On quasi convex
functions and Hadamard's inequality, RGMIA Res. Rep. Coll., 6 (2003), 3,
Article 1.

\bibitem{AH} Y\i ld\i z, \c{C}., Akdemir, A.O. and Avc\i , M., Some
Inequalities of Hermite-Hadamard Type for Functions Whose Derivatives
Absolute Values are Quasi Convex, Submitted.

\bibitem{WR} Wright, E.M., An inequality for convex functions, Amer. Math.
Monthly, 61 (1954), 620-622.

\bibitem{AL} Latif, M. A. and Alomari, M., On Hadamard-type inequalities for 
$h-$convex functions on the co-ordinates, International Journal of Math.
Analysis, 3 (2009), no. 33, 1645-1656.

\bibitem{LAT} Latif, M. A. and Alomari, Hadamard-type inequalities for
product two convex functions on the co-ordinates, International Mathematical
Forum, 4 (2009), no. 47, 2327-2338.

\bibitem{DAR1} Alomari, M. and Darus, M., Hadamard-type inequalities for $s-$%
convex functions, International Mathematical Forum, 3 (2008), no. 40,
1965-1975.

\bibitem{DAR2} Alomari, M. and Darus, M., Co-ordinated $s-$convex function
in the first sense with some Hadamard-type inequalities, Int. Journal
Contemp. Math. Sciences, 3 (2008), no. 32, 1557-1567.

\bibitem{DAR3} Alomari, M. and Darus, M., The Hadamard's inequality for $s-$%
convex function of $2-$variables on the co-ordinates, International Journal
of Math. Analysis, 2 (2008), no. 13, 629-638.

\bibitem{MSET} \"{O}zdemir, M.E., Set, E. and Sar\i kaya, M. Z., Some new
Hadamard's type inequalities for co-ordinated $m-$convex and $(\alpha ,m)-$%
convex functions, Accepted.

\bibitem{SS} Dragomir, S.S., On the Hadamard's inequality for convex
functions on the co-ordinates in a rectangle from the plane, Taiwanese
Journal of Mathematics, 5 (2001), no. 4, 775-788.

\bibitem{HTS} Hwang, D. Y., Tseng, K. L. and Yang, G. S., Some Hadamard's
inequalities for co-ordinated convex functions in a rectangle from the
plane, Taiwanese Journal of Mathematics, 11 (2007), 63-73.

\bibitem{SET3} Sar\i kaya, M. Z., Set, E., \"{O}zdemir, M.E. and Dragomir,
S. S., New some Hadamard's type inequalities for co-ordinated convex
functions, Accepted.

\bibitem{BAK} Bakula, M.K. and Pecaric, J., On the Jensen's inequality for
convex functions on the co-ordinates in a rectangle from the plane,
Taiwanese Journal of Math., 5, 2006, 1271-1292.

\bibitem{SS2} Dragomir, S.S. and Pearce, C.E.M., Selected Topics on
Hermite-Hadamard Type Inequalities and Applications, RGMIA (2000),
Monographs. [ONLINE :
http://ajmaa.org/RGMIA/monographs/hermite\_hadamard.html].

\bibitem{GRE} Greenberg, H.J. and Pierskalla, W.P., A review of quasi convex
functions Reprinted from Operations Research, 19 (1971), 7.

\bibitem{PEC} Pe\v{c}ari\'{c}, J., Proschan, F. and Tong, Y.L., Convex
Functions, Partial Orderings and Statistical Applications, Academic Press
(1992), Inc.
\end{thebibliography}
\end{document}